\tikzstyle{vertex}=[circle,fill=black,inner sep=2pt]
\newtheorem{theorem}{Theorem}[section]
\newtheorem*{theorem*}{Theorem}
\newtheorem{lemma}[theorem]{Lemma}
\newtheorem*{lemma*}{Lemma}
\newtheorem*{question*}{Question}
\newtheorem{corollary}[theorem]{Corollary}
\newtheorem*{proposition*}{Proposition}
\newtheorem{definition}[theorem]{Definition}
\newcommand{\abs}[1]{\left\vert#1\right\vert}
\newcommand{\fl}[1]{\left\lfloor#1\right\rfloor}
\newcommand{\de}[1]{\delta_{#1}}
\newcommand{\dei}[1]{\delta_{i_{#1}}}
\newcommand{\dej}[1]{\delta_{j_{#1}}}
\newcommand{\dd}{\delta}
\newcommand{\com}{\mathrel{\ooalign{$\hidewidth,\hidewidth$\cr$\phantom{>}$}}}
\newcommand{\twr}{{{\rm twr}}}
\begin{document}
\title{A note on the Erd\H os-Hajnal hypergraph Ramsey problem}

\author{Dhruv Mubayi\thanks{Department of Mathematics, Statistics, and Computer Science, University of Illinois, Chicago, IL, 60607 USA.  Research partially supported by NSF grants DMS-1300138 and DMS-1763317. Email: {\tt mubayi@uic.edu}} \and Andrew Suk\thanks{Department of Mathematics, University of California at San Diego, La Jolla, CA, 92093 USA. Research supported by NSF CAREER award DMS-1800746 and by an Alfred Sloan Fellowship. Email: {\tt asuk@ucsd.edu}.} \and Emily Zhu\thanks{Department of Mathematics, University of California at San Diego, La Jolla, CA, 92093 USA.  Email: {\tt e9zhu@ucsd.edu}. }}
\date{}

\maketitle

\begin{abstract}

We show that there is an absolute constant $c>0$ such that the following holds.  For every $n > 1$, there is a 5-uniform hypergraph on at least $2^{2^{cn^{1/4}}}$ vertices with independence number  at most $n$, where every set of 6 vertices induces at most 3 edges.  The double exponential growth rate for the number of vertices is sharp. By applying a stepping-up lemma established by the first two authors, analogous sharp results  are proved for $k$-uniform hypergraphs.  This answers the penultimate open case of a conjecture in Ramsey theory posed by Erd\H os and Hajnal in 1972.
\end{abstract}

\section{Introduction}

The Ramsey number $r_k(s,n)$ is the minimum integer $N$ such that for any red/blue coloring of the $k$-tuples of $[N] = \{1,2,\ldots, N\}$, there is either a set of $s$ integers with all of its $k$-tuples colored red, or a set of $n$ integers with all of its $k$-tuples colored blue.   Estimating $r_k(s,n)$ is a fundamental problem in combinatorics and has been extensively studied since 1935.  For graphs, classical results of Erd\H os \cite{E47} and Erd\H os and Szekeres \cite{ES35} imply that $2^{n/2} < r_2(n,n) < 2^{2n}$. While small improvements have been made in
both the upper and lower bounds for $r_2(n, n)$ (see \cite{conlon,spencer}), the constant factors in the exponents have not changed over the last 75 years.

Unfortunately for 3-uniform hypergraphs, there is an exponential gap between the best known upper and lower bounds for $r_3(n,n)$.  Namely, Erd\H os, Hajnal, and Rado \cite{EHR,ER} showed that 

$$2^{cn^2} < r_3(n,n) < 2^{2^{c'n}},$$

where $c$ and $c'$ are absolute constants.  For $k \geq 4$, their results also imply an exponential gap between the lower and upper bounds for $r_k(n, n)$, 

$$\twr_{k-1}(cn^2) < r_k(n,n) < \twr_k(c'n),$$

\noindent  where the \emph{tower function} is defined recursively as $\twr_1(x) = x$ and $\twr_{i + 1} = 2^{\twr_i(x)}.$   Determining the tower growth rate of $r_k(n,n)$ is one of the most central problems in extremal combinatorics.  Erd\H os, Hajnal, and Rado conjectured that the upper bound is closer to the truth, namely $r_k(n,n) = \twr_k(\Theta(n))$, and Erd\H os offered a \$500 reward for a proof (see~\cite{Chung}).

\emph{Off-diagonal} Ramsey numbers $r_k(s,n)$ have also been extensively studied.  Here, $k$ and $s$ are fixed constants and $n$ tends to infinity.  It follows from well-known results that $r_2(s,n)  = n^{\Theta(1)}$  (see \cite{AKS,B,BK,ER} for the best known bounds), and for 3-uniform hypergraphs, $r_3(s,n) = 2^{n^{\Theta(1)}}$  (see~\cite{CFS} for the best known bounds). 

For $k> 3$, Erd\H os, Hajnal, and Rado showed that $r_k(s,n) \leq \twr_{k-1}(n^{c})$ where $c = c(k,s)$, and Erd\H os and Hajnal conjectured that this bound is the correct tower growth rate. In \cite{MS3}, the first two authors verified the conjecture for $s \geq k + 2$, and for the last case $s = k+1$, they showed that $r_k(k + 1,n) \geq \twr_{k-2}(n^{c\log n})$.  Hence, there remains an exponential gap between the best known lower and upper bounds for $r_k(k + 1,n)$ for $k \geq 4$.  

Due to our lack of understanding of $r_k(k+1,n)$,  Erd\H os and Hajnal in \cite{EH72} introduced the following more general function (their notation was different).

\begin{definition}
For integers $2\leq k  <n$ and $2 \leq t \leq k+1$, let $r_k(k+1,t;n)$ be the minimum $N$ such that for every red/blue coloring of the $k$-tuples of $[N]$, there is a set of $k+1$ integers with at least $t$ of its $k$-tuples colored red, or a set of $n$ integers with all of its $k$-tuples colored blue.  
\end{definition} 
 
Clearly $r_k(k + 1,1;n) =n$ and $r_k(k + 1,k + 1;n) = r_k(k + 1,n)$.  For each $t \in \{2,\ldots, k\}$, Erd\H os and Hajnal \cite{EH72} showed that $r_k(k + 1,t;n) < \twr_{t-1}(n^{\Theta(1)})$ and conjectured that

\begin{equation}\label{EHbound}
r_k(k + 1,t;n) = \twr_{t-1}(n^{\Theta(1)}).
\end{equation}

\noindent   This is known to be true for $k \leq 3$ and for $t \leq 3$ \cite{EH72}. When $k \geq 5$, the first two authors \cite{MS4} verified (\ref{EHbound}) for all $3 \leq t \leq k-2$.  Our main result verifies (\ref{EHbound}) for  $t = k-1$, which is one of the last two remaining cases.

\begin{theorem}\label{main1}
For $k\geq 4$, we have $r_k( k+1,k-1;n) = \twr_{k-2}(n^{\Theta(1)})$.
\end{theorem}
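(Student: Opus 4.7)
The upper bound $r_k(k+1,k-1;n)\le \twr_{k-2}(n^{\Theta(1)})$ is the 1972 Erd\H{o}s--Hajnal bound, so the focus is entirely on the matching lower bound. The case $k=4$ is already covered by the Erd\H{o}s--Hajnal result for $t\le 3$, so one may assume $k\ge 5$. The plan then rests on two ingredients: a direct construction for the base case $k=5$, and an iterated application of the stepping-up lemma established by the first two authors to pass from $k$ to $k+1$.

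For the base case $k=5$, the goal is to build a 5-uniform hypergraph $H$ on $N=2^{2^{cn^{1/4}}}$ vertices with $\alpha(H)\le n$ and with every 6-vertex subset inducing at most 3 edges --- which by definition gives $r_5(6,4;n) > N = \twr_3(cn^{1/4}) = \twr_3(n^{\Theta(1)})$. The natural approach is a two-scale construction in which the vertex set consists of binary strings of length roughly $2^{cn^{1/4}}$, and a 5-set is declared an edge according to a rule depending on the ordering pattern of the pairwise ``separation levels'' $\delta(v_i,v_j)$ (the index of the highest bit where $v_i$ and $v_j$ differ). Choosing the pattern rule carefully forces a short combinatorial case analysis on how the ten separation levels of a 6-set can be consistently ordered to yield the $\le 3$ edges bound automatically; the independence bound is reduced to producing a smaller auxiliary hypergraph (of much lower uniformity) with small independence number, an object guaranteed by a standard probabilistic construction and which determines the $n^{1/4}$ exponent.

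For $k\ge 6$, the plan is to iterate the Mubayi--Suk stepping-up lemma. One application transforms a $k$-uniform hypergraph on $M$ vertices in which every $(k+1)$-set carries at most $k-2$ edges and which has independence number $\le n$ into a $(k+1)$-uniform hypergraph on $2^M$ vertices with the analogous property --- every $(k+2)$-set carries at most $k-1$ edges --- and with independence number still comparable to $n$. Starting from the 5-uniform construction and applying the lemma $k-5$ times yields a $k$-uniform hypergraph on $\twr_{k-2}(cn^{1/4}) = \twr_{k-2}(n^{\Theta(1)})$ vertices that witnesses the required lower bound on $r_k(k+1,k-1;n)$.

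The main obstacle is the base construction for $k=5$. The tension is that random 5-uniform hypergraphs abound with 6-sets carrying more than 3 edges, while highly structured constructions typically admit very large independent sets; the whole difficulty lies in designing a pattern rule on separation levels that simultaneously forbids dense 6-sets and suppresses large anti-cliques. I expect this step to combine a clever combinatorial gadget (to enforce the 6-set bound by case analysis on orderings of the $\delta$'s) with an off-the-shelf probabilistic lower bound for a lower-uniformity off-diagonal Ramsey problem (to control $\alpha(H)$), and the quality of the latter is precisely what pins down the $n^{1/4}$ exponent in the final bound.
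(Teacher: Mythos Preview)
Your plan is correct and mirrors the paper's proof: the upper bound from Erd\H{o}s--Hajnal, the case $k=4$ from the known $t\le 3$ result, a direct stepping-up-style construction for $r_5(6,4;n)$ on binary strings using the separation levels $\delta$ together with an auxiliary random graph coloring, and then the Mubayi--Suk stepping-up lemma (Theorem~\ref{stepk}) iterated for $k\ge 6$. The one refinement worth flagging is that the auxiliary object is not an off-the-shelf off-diagonal Ramsey bound but a bespoke random $2$-coloring satisfying two tailored properties---one of them (a ``no three disjoint $n$-sets with a specified bijection'' condition, part~(1) of Lemma~\ref{lem:graphcolor}) is singled out by the authors as a new ingredient---though its proof is a routine first-moment calculation.
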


This significantly improves the previous best known lower bound for $r_k(k+1,k-1;n)$, which was one exponential less than above (see \cite{MS4}).  This also immediately implies the following new lower bound for $r_k(k + 1,k;n)$, which is now one exponential off from the upper bound obtained by Erd\H os and Hajnal.

\begin{corollary}
For $k\geq 4$, we have $r_k( k+1,k;n) > \twr_{k-2}(n^{\Theta(1)})$.
\end{corollary}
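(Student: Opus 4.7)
The plan is to deduce the corollary from Theorem~\ref{main1} by an essentially one-line monotonicity argument on the parameter $t$ in the definition of $r_k(k+1,t;n)$. The key observation is that $r_k(k+1,t;n)$ is nondecreasing in $t$: if a red/blue coloring of the $k$-subsets of $[N]$ contains no $(k+1)$-set with at least $t$ red $k$-tuples, then it certainly contains no $(k+1)$-set with at least $t'$ red $k$-tuples for any $t' \geq t$, because the latter condition is strictly stronger. Hence any extremal coloring witnessing the lower bound $N = \twr_{k-2}(n^{\Theta(1)})$ for $r_k(k+1,k-1;n)$ provided by Theorem~\ref{main1}, namely a coloring of $\binom{[N]}{k}$ in which no $(k+1)$-set spans $k-1$ or more red edges and no $n$-set is entirely blue, automatically witnesses the same lower bound for $r_k(k+1,k;n)$.

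Concretely, I would note $r_k(k+1,k;n) \geq r_k(k+1,k-1;n)$ and invoke Theorem~\ref{main1} to get $r_k(k+1,k;n) \geq \twr_{k-2}(n^{\Omega(1)})$, which is exactly the content of the corollary. I would also remark, to contextualize the statement, that the Erd\H os--Hajnal upper bound $r_k(k+1,k;n) \leq \twr_{k-1}(n^{\Theta(1)})$ shows the new lower bound is precisely one exponential away from the best known upper bound, which is the claim following the corollary in the introduction.

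There is no real obstacle: the entire difficulty of the corollary is absorbed into Theorem~\ref{main1}, whose proof produces the requisite coloring. The only thing to verify is the monotonicity direction, and this is immediate from the asymmetric definition of $r_k(k+1,t;n)$.
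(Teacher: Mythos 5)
Your proof is correct and is exactly the argument the paper intends: the corollary is stated as an immediate consequence of Theorem~\ref{main1} via the monotonicity of $r_k(k+1,t;n)$ in $t$, which you identify and justify properly. Nothing further is needed.
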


Finally, let us point out that Erd\H os and Hajnal conjectured that the tower growth rate for both $r_k(k + 1,k;n)$ and the classical Ramsey number $r_k(k+1,n)$ are the same.  Thus, verifying (\ref{EHbound}) for $r_k(k + 1,k;n)$ would determine the tower height   for $r_k(k + 1,n)$.

We develop several crucial new ingredients to the stepping up method in our construction, for example, part (1) of Lemma~\ref{lem:graphcolor}, and on page 8, analyzing sequences of local maxima. It is plausible that these new ideas can be further enhanced to determine the tower height of $r_k(k + 1,n)$.

\section{Proof of Theorem \ref{main1}}

In \cite{MS3}, the first two authors proved the following.

\begin{theorem}[Theorem 7 in \cite{MS3}]\label{stepk}

For $k\geq 6$ and $t\geq 5$, we have $r_k(k+1,t;2kn) > 2^{r_{k-1}(k,t-1;n)-1}.$

\end{theorem}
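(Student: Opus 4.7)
The approach is the standard Erd\H os--Hajnal--Rado stepping-up construction, adapted to the off-diagonal $(k+1,t)$ setting of the function $r_k(k+1,t;\cdot)$. Let $m:=r_{k-1}(k,t-1;n)-1$ and fix a red/blue coloring $\chi$ of $\binom{[m]}{k-1}$ witnessing this bound: every $k$-subset of $[m]$ spans at most $t-2$ red $(k-1)$-tuples, and no $n$-subset of $[m]$ is entirely blue. I will produce a coloring $\chi'$ of the $k$-subsets of $V:=\{0,1\}^m$, ordered lexicographically (which matches the usual order on $[2^m]$), so that no $(k+1)$-subset of $V$ spans $\geq t$ red $k$-tuples and no $2kn$-subset of $V$ is entirely blue, yielding $r_k(k+1,t;2kn)>2^m$. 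For distinct $\epsilon,\epsilon'\in V$ write $\dd(\epsilon,\epsilon')$ for the largest coordinate where they differ, and for $\epsilon_1<\cdots<\epsilon_k$ in $V$ set $\dd_i:=\dd(\epsilon_i,\epsilon_{i+1})$. The construction relies on the two standard facts that $\dd_i\neq\dd_{i+1}$ for every $i$, and that $\dd(\epsilon_a,\epsilon_b)=\max_{a\le i<b}\dd_i$ with the maximum uniquely attained. On a $k$-tuple with strictly monotone $\dd$-sequence $(\dd_1,\ldots,\dd_{k-1})$, set $\chi'=\chi(\{\dd_1,\ldots,\dd_{k-1}\})$; on a $k$-tuple whose $\dd$-sequence is non-monotone, use a deterministic rule that depends only on the order-type and biases toward red exactly on the patterns needed to force structure in large blue sets.

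For the red side, fix a $(k+1)$-set $\{\epsilon_1<\cdots<\epsilon_{k+1}\}$ with $\dd$-sequence $\dd_1,\ldots,\dd_k$. Deleting $\epsilon_j$ merges the adjacent pair $\dd_{j-1},\dd_j$ into $\max(\dd_{j-1},\dd_j)$. Letting $p$ be the unique index at which $\dd_1,\ldots,\dd_k$ attains its maximum, a case analysis shows that deletions far from $p$ either preserve the order-type of the $\dd$-sequence or turn it monotone, in which case $\chi'$ agrees with $\chi$ applied to a $(k-1)$-subset of $\{\dd_1,\ldots,\dd_k\}$. Only the deletions of $\epsilon_p$ or $\epsilon_{p+1}$ can introduce a new non-monotone pattern, and the rule is set up so that at most one of the $k+1$ induced $k$-subtuples is red purely because of it. Hence $\geq t$ red $k$-subtuples force $\geq t-1$ red $(k-1)$-tuples inside the fixed $k$-subset $\{\dd_1,\ldots,\dd_k\}\subseteq[m]$, contradicting the choice of $\chi$.

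For the blue side, suppose $B=\{\epsilon_{j_1}<\cdots<\epsilon_{j_{2kn}}\}$ is entirely blue, and let $D_i:=\dd(\epsilon_{j_i},\epsilon_{j_{i+1}})$ for $1\leq i\leq 2kn-1$. The non-monotone rule is chosen so that each blue $k$-window of $B$ forces the corresponding $(k-1)$-long $\dd$-subsequence to lie in a restricted class of patterns; chaining overlapping windows then forces $D_1,\ldots,D_{2kn-1}$ to decompose into a bounded number $O(k)$ of maximal monotone runs, and the factor $2k$ in $2kn$ is tuned so that pigeonhole produces a run of length $\geq n$. The $n$ underlying indices of such a run give an $n$-subset of $V$ all of whose $k$-subtuples are monotone-typed and blue, and the monotone case of the definition of $\chi'$ forces every $(k-1)$-subset of the corresponding $n$-subset of $[m]$ to be blue under $\chi$, contradicting the no-all-blue-$n$-set property of $\chi$.

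The main obstacle is designing and analyzing the rule for non-monotone $\dd$-sequences so that both verifications succeed with the exact parameters claimed. The rule must be restrictive enough that a $(k+1)$-set contributes at most one extra red $k$-tuple beyond those dictated by $\chi$, yet permissive enough that a blue $2kn$-set decomposes into only $O(k)$ monotone $\dd$-runs. Aligning these sides so that the red loss is exactly one and the blue factor is exactly $2k$ is the technical heart of the proof, and it is the reason for the hypotheses $t\geq 5$ (ensuring the $\chi$-bound of $t-1$ has enough slack) and $k\geq 6$ (ensuring the order-type rule has enough local extrema to distinguish the relevant patterns).
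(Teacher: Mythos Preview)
The paper does not prove Theorem~\ref{stepk}; it is quoted verbatim from~\cite{MS3} and used as a black box. So there is no ``paper's own proof'' to compare against here, and your write-up should be judged on whether it stands on its own as a proof of the cited result.

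Your high-level outline is the right one and is indeed what~\cite{MS3} does: step up a witness coloring $\chi$ on $(k-1)$-tuples of $[m]$ to a coloring $\chi'$ on $k$-tuples of $\{0,1\}^m$, inherit $\chi$ on monotone $\delta$-sequences, and supplement with a rule on non-monotone $\delta$-sequences. But as written there is a genuine gap: you never define the non-monotone rule. Both of your key claims depend entirely on that definition, and neither follows from the generic description you give. On the red side, ``at most one of the $k+1$ induced $k$-subtuples is red purely because of it'' is simply asserted; moreover, even granting it, you cannot conclude that $t$ red $k$-tuples force $t-1$ distinct red $(k-1)$-tuples in $\chi$, because two different deletions can yield the \emph{same} $(k-1)$-subset of $\{\delta_1,\ldots,\delta_k\}$ (e.g.\ in the increasing case deleting $\epsilon_1$ and deleting $\epsilon_2$ both give $\{\delta_2,\ldots,\delta_k\}$), and in non-monotone cases the $\delta_i$ need not even be pairwise distinct. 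The actual argument in~\cite{MS3} handles this with a specific rule based on the positions of local extrema and a case analysis over the order-types of $(\delta_1,\ldots,\delta_k)$; the hypotheses $k\ge 6$ and $t\ge 5$ enter precisely in that case analysis, not merely as ``slack.''

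On the blue side, the claim that an entirely blue $2kn$-set forces the $D$-sequence into $O(k)$ monotone runs is again a property of the particular non-monotone rule, not something that follows from your description. In~\cite{MS3} the rule is chosen so that certain zig-zag patterns are red, and one argues that a blue set cannot contain too many local extrema in its $D$-sequence; the factor $2k$ is what survives that count. Your sketch records the shape of the argument but omits the content that makes it work. To turn this into a proof you must write down the rule explicitly and carry out both case analyses.
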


In what follows, we will prove the following theorem.  Together with Theorem \ref{stepk}, Theorem \ref{main1} quickly follows.

\begin{theorem}

There is an absolute constant $c> 0$ such that $r_5(6,4;n) > 2^{2^{cn^{1/4}}}$.

\end{theorem}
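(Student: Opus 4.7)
The plan is to construct the 5-uniform hypergraph $H$ on $N = 2^{2^{cn^{1/4}}}$ vertices by iterating a modified Erd\H os--Hajnal stepping-up procedure twice, starting from a small base object on roughly $m \asymp n^{1/4}$ vertices. The base will be a 2-colored edge/triple structure together with an auxiliary graph coloring (the object provided by a lemma of the type alluded to as Lemma~\ref{lem:graphcolor}(1) in the introduction), chosen so that no blue clique of size $\Theta(n^{1/4})$ appears and so that a "local" forbidden pattern does not occur. A probabilistic construction with appropriate densities, cleaned up via alteration or the local lemma, should be sufficient to secure both properties simultaneously.

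Both stepping-ups use the standard device: for $x \neq y$ in $\{0,1\}^M$ let $\delta(x,y)$ be the largest coordinate on which they differ, and for an ordered tuple $v_0 < \dots < v_j$ write $\delta_i = \delta(v_i,v_{i+1})$. What is non-standard is the coloring rule. Rather than distinguishing only monotone gap sequences, I would track the sequence of \emph{local maxima} of $(\delta_0,\dots,\delta_{j-1})$ (indices $i$ with $\delta_{i-1}<\delta_i>\delta_{i+1}$, with appropriate endpoint convention) and let the colour of a 4- or 5-tuple be determined by that pattern together with the value of the base coloring on a subsequence of $(\delta_0,\dots,\delta_{j-1})$ indexed by the local maxima. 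The first stepping-up produces a 2-coloring of 4-tuples of $\{0,1\}^m$, and the second a 2-coloring of 5-tuples of $\{0,1\}^{2^m}$, which is the hypergraph $H$.

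To verify the two required properties of $H$: for the independence number, I would argue contrapositively. If $S\subseteq\{0,1\}^{2^m}$ is blue with $|S|=n$, then reading off the gap sequences of consecutive elements of $S$ and applying an Erd\H os--Szekeres style monotone-subsequence extraction twice produces a "compressed" blue-like structure of size $\asymp n^{1/4}$ in the base, contradicting the base's independence property (or the property supplied by the base graph coloring). For the 6-set condition, fix any $v_0<\dots<v_5$ with gap sequence $(\delta_0,\dots,\delta_4)$; deleting $v_j$ merges the pair $(\delta_{j-1},\delta_j)$ into its maximum, so the six induced 5-tuples have six related gap sequences. Case analysis on the positions of the local maxima of $(\delta_0,\dots,\delta_4)$, together with the combinatorial constraints from the auxiliary base coloring, will show that at most three of the six resulting patterns can trigger a red 5-edge.

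The main obstacle, and presumably the reason the case $t=k-1$ resisted earlier attacks, is calibrating the coloring rule so that both the independence-number bound and the delicate "at most $3$ red 5-edges per 6-set" bound hold simultaneously. Classical stepping-up rules that suffice for $r_k(k+1,n)$-type problems typically produce 6-sets carrying $4$ or more red 5-edges through unavoidable "monotone" configurations. The proposed fix---refining the rule using the local-maxima pattern and a carefully designed base graph-coloring---must be sharp enough to block each of these bad configurations while still killing every $n$-element blue set. I expect the technical heart of the proof to lie in the 6-set case analysis, where the six deletion gap sequences must be sorted into exactly three red-producing and three blue-producing types.
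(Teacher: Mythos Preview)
Your plan diverges from the paper's proof in its basic architecture, and the divergence is not cosmetic. The paper does \emph{not} iterate the step-up twice through a 4-uniform intermediate starting from a base of size $\asymp n^{1/4}$. It performs a \emph{single} step-up: the base object is already of exponential size, namely a red/blue coloring $\phi$ of the \emph{pairs} of $\{0,1,\dots,\lfloor 2^{cn}\rfloor\}$ (Lemma~\ref{lem:graphcolor}), and one step-up produces the 5-uniform coloring $\chi$ on $2^{\lfloor 2^{cn}\rfloor}$ vertices. The exponent $1/4$ does not come from two rounds of Erd\H os--Szekeres; it comes from the fact that the blue clique excluded has size $128n^4$, so replacing $n$ by $\Theta(n^{1/4})$ at the end gives the stated bound.

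The proposal also leaves the coloring rule for $\chi$ unspecified, and this is precisely where the work lies. The paper's rule has four explicit clauses depending on the order type of $(\delta_1,\delta_2,\delta_3,\delta_4)$ and on the $\phi$-colors of certain pairs among them --- including a clause triggered by the equality $\delta_1=\delta_4$ --- and only with these exact clauses does the eight-case analysis on 6-sets close with at most three red edges. Your sentence ``let the colour \dots\ be determined by that pattern together with the value of the base coloring on a subsequence \dots\ indexed by the local maxima'' is not enough to pin down a rule for which the case analysis goes through. Finally, the blue-clique argument is not ``Erd\H os--Szekeres twice'': after extracting local maxima, and then local extrema of that sequence, the paper locates a dominant index $k$ and constructs three disjoint $n$-sets $A,B,C$ together with a bijection $f:B\to C$, and it is \emph{part~(1)} of Lemma~\ref{lem:graphcolor} (the forbidden $(A,B,C,f)$ configuration) that delivers the contradiction in the non-monotone regime, while part~(2) (the ``bad 4-tuple'' pattern) handles the monotone regimes. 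Your plan mentions part~(1) but folds its role into a monotone-subsequence extraction, which is not how it is used.
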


\subsection{A double exponential lower bound for $r_5(6,4;n)$}

In this section, we begin with a graph coloring with certain properties which we will later use to define a two-coloring of the edges of a 5-uniform hypergraph.

\begin{lemma}
\label{lem:graphcolor}
For $n \geq 6$, there is an absolute constant $c > 0$ such that the following holds. There exists a red/blue coloring $\phi$ of the pairs of $\{0,1,\dots,\fl{2^{cn}}\}$ such that:
\begin{enumerate}
\item There are no 3 disjoint $n$-sets $A, B, C \subset \{0,1,\dots,\fl{2^{cn}}\}$ with the property that there is a bijection $f: B \to C$ such that for any $a \in A, b \in B$, at least one of $\phi(a,b) = \text{red}$ or $\phi(a,f(b)) = \text{blue}$ occurs.
\item There is no $n$-set $A \subset \{0,1,\dots,\fl{2^{cn}}\}$ such that every 4-tuple $a_i,a_j,a_k,a_\ell \in A$ with $a_i < a_j < a_k < a_\ell$ avoids the pattern:
\[\phi(a_i,a_j) = \phi(a_j,a_k) = \phi(a_j,a_\ell) = \text{red}, \qquad \phi(a_i,a_k) = \phi(a_i,a_\ell) = \phi(a_k,a_\ell) = \text{blue}\]
\end{enumerate}
\end{lemma}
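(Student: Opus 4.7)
The plan is to take $\phi$ to be a uniformly random red/blue coloring of the pairs of $\{0,1,\ldots,N\}$ where $N = \fl{2^{cn}}$ and show by a union bound that with positive probability both conditions hold.

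For condition (1), fix disjoint $n$-sets $A, B, C$ and a bijection $f : B \to C$. For each $(a, b) \in A \times B$, let $E_{a, b}$ be the event that $\phi(a, b) = \text{red}$ or $\phi(a, f(b)) = \text{blue}$; this has probability $3/4$. A short case check shows that as $(a,b)$ ranges over $A \times B$, the $2n^2$ pairs $\{a, b\}$ and $\{a, f(b)\}$ are all distinct, using that $A, B, C$ are disjoint and $f$ is a bijection. Hence the $n^2$ events $E_{a, b}$ are mutually independent, so $\mathbb{P}(\text{all } E_{a,b} \text{ occur}) = (3/4)^{n^2}$. Union-bounding over at most $N^{3n} \cdot n!$ choices of $(A, B, C, f)$ gives an expected number of violations at most $N^{3n} \cdot n! \cdot (3/4)^{n^2}$, which is $< 1/2$ whenever $c$ is a sufficiently small absolute constant.

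For condition (2), fix an $n$-subset $A \subseteq \{0, 1, \ldots, N\}$. For any single 4-subset of $A$, the forbidden pattern pins down all six edge-colors, so its avoidance probability equals $63/64$. To convert these into genuinely independent events, I invoke a standard near-optimal $K_4$-packing of $K_n$ (via Wilson's theorem or a R\"odl-nibble argument): there exist $m = \Omega(n^2)$ pairwise edge-disjoint 4-subsets of $A$. Edge-disjointness implies independence of the corresponding avoidance events, so $\mathbb{P}(\text{every 4-tuple of } A \text{ avoids}) \leq (63/64)^m = e^{-\Omega(n^2)}$. Union-bounding over $\binom{N+1}{n} \leq N^n$ choices of $A$ gives an expected number of bad sets at most $N^n \cdot e^{-\Omega(n^2)} < 1/2$, again for $c$ small. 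Taking $c$ small enough for both estimates, the desired $\phi$ exists.

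The main obstacle I expect is the dependence structure in condition (2): the events ``4-tuple exhibits the pattern'' are correlated across different 4-tuples through shared edges, so a naive calculation would not yield the required $e^{-\Omega(n^2)}$ bound. Leveraging the near-optimal $K_4$-packing is precisely what extracts $\Omega(n^2)$ independent events and matches the advertised threshold $N = 2^{\Theta(n)}$. Condition (1) is comparatively easy because the relevant events are already independent after the pair-distinctness check.
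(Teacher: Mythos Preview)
Your proof is correct and follows essentially the same approach as the paper: a uniformly random coloring, the $(3/4)^{n^2}$ bound for condition~(1), and an edge-disjoint $K_4$-packing (what the paper calls a partial Steiner $(n,4,2)$-system, citing Erd\H os--Hanani rather than Wilson/R\"odl) of size $\Omega(n^2)$ to obtain independence for condition~(2). In fact you supply a detail the paper omits, namely the verification that the $2n^2$ pairs $\{a,b\},\{a,f(b)\}$ are all distinct so that the events $E_{a,b}$ are genuinely independent.
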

\begin{proof}
Set $N = \fl{2^{cn}}$, where $c$ is a sufficiently small constant that will be determined later. Consider a random 2-coloring of the unordered pairs of $\{0,1,\dots,N-1\}$ where each pair is assigned red or blue with equal probability independent of all other pairs. Then, the expected number of $A,B,C$ as in part 1 is at most 
\[\binom{N}{n}^3n!\left(\frac{3}{4}\right)^{n^2} < \frac{1}{3},\]
where the inequality holds by taking $c$ sufficiently small. This is since we pick each of the $n$-sets, one of $n!$ possible bijections from $B$ to $C$, and then there is a $\frac{3}{4}$ probability that we have the desired color pattern for each pair of $a\in A,b\in B$. 

We call a 4-tuple $a_i,a_j,a_k,a_\ell \in \{0,1,\dots,N-1\}$ with $a_i < a_j < a_k < a_\ell$ \emph{bad} if
\[\phi(a_i,a_j) = \phi(a_j,a_k) = \phi(a_j,a_\ell) = \text{red}, \qquad \phi(a_i,a_k) = \phi(a_i,a_\ell) = \phi(a_k,a_\ell) = \text{blue}\]
and \emph{good} otherwise. The probability that such a fixed 4-tuple is bad is $\frac{1}{2^6} = \frac{1}{64}$ and thus the probability that such a fixed 4-tuple is good is $\frac{63}{64}$. Now consider some fixed $n$-set $A \subset \{0,1,\dots,N-1\}$. We estimate the probability that $A$ contains no bad 4-tuple. Note that there exists a partial Steiner $(n,4,2)$-system $S$ on $A$, i.e.\ a 4-uniform hypergraph on the $n$-vertex set $A$ with the property that every pair of vertices is contained in at most one 4-tuple, with at least $c'n^2$ edges where $c'>0$ is some constant (e.g.\ see \cite{EH63}). Then, the probability that a 4-tuple in $A$ is good is at most the probability that every 4-tuple in $S$ is good. Since 4-tuples in $S$ are independent as no two 4-tuples have more than one vertex is common, the probability that every 4-tuple in $S$ is a good 4-tuple is at most $\left(\frac{63}{64}\right)^{c'n^2}$. Therefore, the expected number of $n$-sets $A$ with only good 4-tuples is at most
\[\binom{N}{n}\left(\frac{63}{64}\right)^{c'n^2} < \frac{1}{3},\]
again where we take $c$ sufficiently small. Thus, by Markov's inequality and the union bound, we  conclude that there is a 2-coloring $\phi$ with the desired properties.
\end{proof}

We will use this lemma to produce a coloring of a 5-uniform hypergraph. Given some natural number $N$, let $V = \{0,1,\dots,2^N-1\}$. Then for $v \in V$, we write $v = \sum_{i=0}^{N-1} v(i)2^i$ where $v(i) \in \{0,1\}$ for each $i$. For any $u \neq v$, we then let $\delta(u,v)$ denote the largest $i \in \{0,1,\dots,N-1\}$ such that $u(i) \neq v(i)$. We then have the following properties.

\textbf{Property I}: For every triple $u < v < w, \delta(u,v) \neq \delta(v,w)$.

\textbf{Property II}: For $v_1 < \cdots < v_r, \delta(v_1,v_r) = \max_{1\leq j \leq r-1} \delta(v_j,v_{j+1})$.

From Properties I and II, we also derive the following.

\textbf{Property III}: For every 4-tuple $v_1 < \cdots < v_4$, if $\delta(v_1,v_2) > \delta(v_2,v_3)$, then $\delta(v_1,v_2) \neq \delta(v_3,v_4)$. Note that if $\delta(v_1,v_2) < \delta(v_2,v_3)$, it is possible that $\delta(v_1,v_2) = \delta(v_3,v_4)$.

\textbf{Property IV}: For $v_1 < \cdots < v_r$, set $\delta_j = \delta(v_j,v_{j+1})$ for $j \in [r-1]$ and suppose that $\delta_1,\dots,\delta_{r-1}$ forms a monotone sequence. Then for every subset of $k$ vertices $v_{i_1},v_{i_2},\dots,v_{i_k}$ where $v_{i_1} < \cdots < v_{i_k}$, $\delta(v_{i_1},v_{i_2}), \delta(v_{i_2},v_{i_3}), \dots, \delta(v_{i_{k-1}},v_{i_k})$ forms a monotone sequence. Moreover for every subset of $k-1$ such $\delta_j$'s, i.e.\ $\delta_{j_1},\delta_{j_2},\dots,\delta_{j_{k-1}}$, there are $k$ vertices $v_{i_1},\dots,v_{i_k}$ such that $\delta(v_{i_t},v_{i_{t+1}}) = \delta_{j_t}$.

We now turn to the coloring of a 5-uniform hypergraph. Let $c > 0$ be the constant given by Lemma~\ref{lem:graphcolor} and let $U = \{0,1,\dots,\fl{2^{cn}}\}$ and $\phi: \binom{U}{2} \to \{\text{red}, \text{blue}\}$ be a 2-coloring of the pairs of $U$ satisfying the properties given in the lemma. Now let $N = 2^{\fl{2^{cn}}}$ and let $V = \{0,1,\dots,N-1\}$. In the following, we will use the coloring $\phi$ to define a red/blue coloring $\chi: \binom{V}{5} \to \{\text{red}, \text{blue}\}$ of the 5-tuples of $V$ such that $\chi$ produces at most $3$ red edges among any 6 vertices and $\chi$ does not produce a blue copy of $K^{(5)}_{128n^4}$. This would imply that $r_5(6,4;n) > 2^{2^{c'n^{1/4}}}$ for some constant $c' > 0$. 

For $v_1,\dots,v_5 \in V$ with $v_1 < v_2 < \cdots < v_5$, let $\dd_i = \dd(v_i,v_{i+1})$. We set $\chi(v_1,\dots,v_5) = \text{red}$ if:
\begin{enumerate}
\item We have that $\de1,\de2,\de3,\de4$ are monotone and form a bad 4-tuple, that is, if $\de1 < \de2 < \de3 < \de4$ then:
\[\phi(\de1,\de2) = \phi(\de2,\de3) = \phi(\de2,\de4) = \text{red}, \qquad \phi(\de1,\de3) = \phi(\de1,\de4) = \phi(\de3,\de4) = \text{blue},\]
and if $\de1 > \de2 > \de3 > \de4$ then:
\[\phi(\de4,\de3) = \phi(\de3,\de2) = \phi(\de3,\de1) = \text{red}, \qquad \phi(\de4,\de2) = \phi(\de4,\de1) = \phi(\de2,\de1) = \text{blue}.\]
\item We have that $\de1 > \de2 < \de3 > \de4$, where $\de1,\de2,\de3,\de4$ are all distinct with $\de1 < \de3, \de2 > \de4$ and $\phi(\de1,\de4) = \text{red}$, $\phi(\de2,\de4) = \text{blue}$. The ordering can also be expressed as $\de3 > \de1 > \de2 > \de4$.
\item We have that $\de1 < \de2 > \de3 < \de4$, where $\de1,\de2,\de3,\de4$ are all distinct with $\de1 < \de3, \de2 > \de4$ and $\phi(\de1,\de4) = \text{red}$, $\phi(\de1,\de3) = \text{blue}$. The ordering can also be expressed as $\de2 > \de4 > \de3 > \de1$.
\item We have that $\de1 < \de2 > \de3 < \de4$ and $\de1 = \de4$. In other words, $\de2 > \de1 = \de4 > \de3$.
\end{enumerate}
Otherwise $\chi(v_1,\dots,v_5) = \text{blue}$.

\begin{figure}
\begin{subfigure}{.35\textwidth}
\centering
\begin{tikzpicture}[scale=.5]
\node at (1,1) {$\delta_4$};
\node at (2,1) {$\delta_3$};
\node at (3,1) {$\delta_2$};
\node at (4,1) {$\delta_1$};

\node (1) at (1,0) [vertex] {};
\node (2) at (2,0) [vertex] {};
\node (3) at (3,0) [vertex] {};
\node (4) at (4,0) [vertex] {};

\draw[ultra thick, red] (3) edge (4)
                    (2) edge (3)
                    (1) edge[bend left = 40] (3);
\draw[ultra thick, blue] (2) edge[bend left = 40] (4)
                    (1) edge (2)
                    (1) edge[bend right = 30] (4);
                    
\draw (-.5,-1) node {$v_1$:} (1,-1) node {0} (2,-1) node {0} (3,-1) node {0} (4,-1) node {0};
\draw (-.5,-2) node {$v_2$:} (1,-2) node {0} (2,-2) node {0} (3,-2) node {0} (4,-2) node {1};
\draw (-.5,-3) node {$v_3$:} (1,-3) node {0} (2,-3) node {0} (3,-3) node {1} (4,-3) node {1};
\draw (-.5,-4) node {$v_4$:} (1,-4) node {0} (2,-4) node {1} (3,-4) node {1} (4,-4) node {1};
\draw (-.5,-5) node {$v_5$:} (1,-5) node {1} (2,-5) node {1} (3,-5) node {1} (4,-5) node {1};
\end{tikzpicture}
\begin{tikzpicture}[scale=.5]
\node at (1,1) {$\delta_1$};
\node at (2,1) {$\delta_2$};
\node at (3,1) {$\delta_3$};
\node at (4,1) {$\delta_4$};

\node (1) at (1,0) [vertex] {};
\node (2) at (2,0) [vertex] {};
\node (3) at (3,0) [vertex] {};
\node (4) at (4,0) [vertex] {};

\draw[ultra thick, red] (3) edge (4)
                    (2) edge (3)
                    (1) edge[bend left = 40] (3);
\draw[ultra thick, blue] (2) edge[bend left = 40] (4)
                    (1) edge (2)
                    (1) edge[bend right = 30] (4);
                    
\draw (1,-1) node {0} (2,-1) node {0} (3,-1) node {0} (4,-1) node {0};
\draw (1,-2) node {1} (2,-2) node {0} (3,-2) node {0} (4,-2) node {0};
\draw (1,-3) node {1} (2,-3) node {1} (3,-3) node {0} (4,-3) node {0};
\draw (1,-4) node {1} (2,-4) node {1} (3,-4) node {1} (4,-4) node {0};
\draw (1,-5) node {1} (2,-5) node {1} (3,-5) node {1} (4,-5) node {1};
\end{tikzpicture}
\subcaption{Monotone}
\end{subfigure}\begin{subfigure}{.2\textwidth}
\centering
\begin{tikzpicture}[scale=.5]
\node at (1,1) {$\delta_3$};
\node at (2,1) {$\delta_1$};
\node at (3,1) {$\delta_2$};
\node at (4,1) {$\delta_4$};

\node (1) at (1,0) [vertex] {};
\node (2) at (2,0) [vertex] {};
\node (3) at (3,0) [vertex] {};
\node (4) at (4,0) [vertex] {};

\draw[ultra thick, red] (2) edge[bend left = 40] (4);
\draw[ultra thick, blue] (3) edge (4);

\draw (1,-1) node {0} (2,-1) node {0} (3,-1) node {0} (4,-1) node {0};
\draw (1,-2) node {0} (2,-2) node {1} (3,-2) node {0} (4,-2) node {0};
\draw (1,-3) node {0} (2,-3) node {1} (3,-3) node {1} (4,-3) node {0};
\draw (1,-4) node {1} (2,-4) node {0} (3,-4) node {0} (4,-4) node {0};
\draw (1,-5) node {1} (2,-5) node {0} (3,-5) node {0} (4,-5) node {1};
\end{tikzpicture}
\caption{$\de3 > \de1 > \de2 > \de4$}
\end{subfigure}
\begin{subfigure}{.2\textwidth}
\centering
\begin{tikzpicture}[scale=.5]
\node at (1,1) {$\delta_2$};
\node at (2,1) {$\delta_4$};
\node at (3,1) {$\delta_3$};
\node at (4,1) {$\delta_1$};

\node (1) at (1,0) [vertex] {};
\node (2) at (2,0) [vertex] {};
\node (3) at (3,0) [vertex] {};
\node (4) at (4,0) [vertex] {};

\draw[ultra thick, red] (2) edge[bend left = 40] (4);
\draw[ultra thick, blue] (3) edge (4);

\draw (1,-1) node {0} (2,-1) node {0} (3,-1) node {0} (4,-1) node {0};
\draw (1,-2) node {0} (2,-2) node {0} (3,-2) node {0} (4,-2) node {1};
\draw (1,-3) node {1} (2,-3) node {0} (3,-3) node {0} (4,-3) node {0};
\draw (1,-4) node {1} (2,-4) node {0} (3,-4) node {1} (4,-4) node {0};
\draw (1,-5) node {1} (2,-5) node {1} (3,-5) node {0} (4,-5) node {0};
\end{tikzpicture}
\caption{$\de2 > \de4 > \de3 > \de1$}
\end{subfigure}\begin{subfigure}{.24\textwidth}
\centering
\begin{tikzpicture}[scale=.5]
\node at (.5,1) {$\delta_2$};
\node at (2,1) {$\delta_1/\delta_4$};
\node at (3.5,1) {$\delta_3$};

\node (1) at (1,0) [vertex] {};
\node (2) at (2,0) [vertex] {};
\node (3) at (3,0) [vertex] {};

\draw (1,-1) node {0} (2,-1) node {0} (3,-1) node {0};
\draw (1,-2) node {0} (2,-2) node {1} (3,-2) node {0};
\draw (1,-3) node {1} (2,-3) node {0} (3,-3) node {0};
\draw (1,-4) node {1} (2,-4) node {0} (3,-4) node {1};
\draw (1,-5) node {1} (2,-5) node {1} (3,-5) node {0};
\end{tikzpicture}
\caption{$\delta_2 > \delta_1 = \delta_4 > \delta_3$}
\end{subfigure}
\caption{Examples of $v_1 < v_2 < v_3 < v_4 < v_5$ and $\delta_i = \delta(v_i,v_{i+1})$ for $i \in [4]$ such that $\chi(v_1,\dots,v_5)$ is red. Each $v_i$ is represented in binary with the left-most entry corresponding to the most significant bit.}
\end{figure}
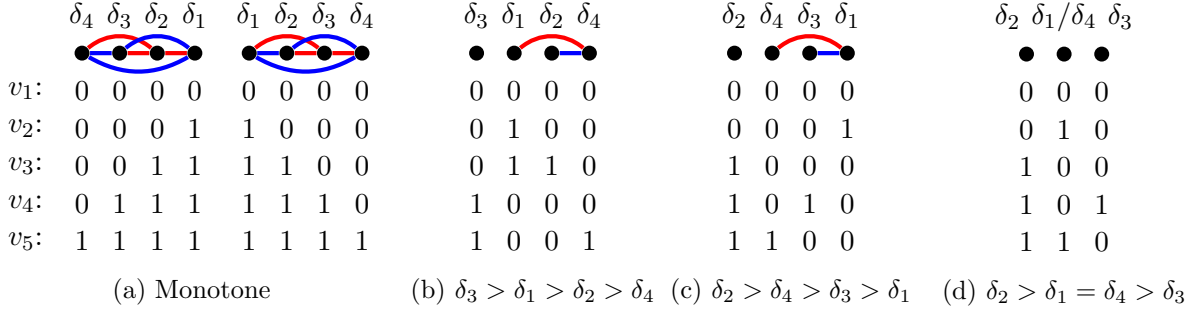

Assume for the sake of contradiction that there are at least 4 red edges among some 6 vertices. Let these vertices be $v_1,\dots,v_6$ where $v_1 < v_2 < \cdots < v_6$ and let $\dd_i = \dd(v_i,v_{i+1})$. Let $e_i = \{v_1,\dots,v_6\} \setminus \{v_i\}$. Let $\dd(e_i)$ be the resulting sequence of $\dd$'s. In particular, for $i = 1$, $\dd(e_1) = (\de2,\de3,\de4,\de5)$. For $2 \leq i \leq 5$, $\dd(e_i) = (\de1, \dots, \dd(v_{i-1},v_{i+1}), \dots, \de5)$. For $i = 6$, $\dd(e_6) = (\de1,\de2,\de3,\de4)$. In the following we will often use that if $2 \leq i \leq 5$, then $\delta(v_{i-1},v_{i+1}) = \max(\de{i-1},\de{i})$ by Property II.

For convenience, if inequalities are known between consecutive $\delta$'s, this will be indicated in the sequence by replacing the comma with the respective sign. For instance, assume that $\de1 < \de2 > \de3 < \de4 > \de5$. Then since $\dd(e_1) = (\de2,\de3,\de4,\de5)$ has $\de2 > \de3 < \de4 > \de5$, we will write
\[\dd(e_1) = (\de2 > \de3 < \de4 > \de5).\]
Similarly, if not all inequalities are known, as in $\dd(e_3)$, we write,
\[\dd(e_3) = (\de1 < \de2 \com \de4 > \de5).\]

Now we will consider cases depending on the ordering of $\de1,\dots,\de5$, and we will further split into subcases by taking an ordering and reversing it. There are 16 possible orderings so we will have 8 cases in what follows. 

\textit{Case 1a}: Suppose $\de1 > \de2 < \de3 > \de4 < \de5$. This implies that
\begin{align*}
\dd(e_1) &= (\de2 < \de3 > \de4 < \de5),\\
\dd(e_2) = \dd(e_3) &= (\de1 \com \de3 > \de4 < \de5),\\
\dd(e_4) = \dd(e_5) &= (\de1 > \de2 < \de3 \com \de5),\\
\dd(e_6) &= (\de1 > \de2 < \de3 > \de4).
\end{align*}
In particular, note that at least one of $e_4,e_5,e_6$ must be red so we must have that $\delta_1 < \delta_3$ and $\delta_2 > \de4$. However, since $\de1 > \de2 > \de4$, note that $e_1$ is only red if $\de2 = \de5$ and similarly $e_2,e_3$ are only red if $\de1 = \de5$. Since these cannot happen simultaneously, there is at least one blue edge among these three edges. Thus, we must have that $e_4, e_5$ are also red to avoid having three blue edges, making $\de2 > \de5$ (and $\de3 > \de5$). However, then $\de1 > \de2 > \de5$ so none of $e_1,e_2,e_3$ are red and thus there are at most 3 red edges.

\textit{Case 1b}: Suppose $\de1 < \de2 > \de3 < \de4 > \de5$. This implies that
\begin{align*}
\dd(e_1) = \dd(e_2) &= (\de2 > \de3 < \de4 > \de5),\\
\dd(e_3) = \dd(e_4) &= (\de1 < \de2 \com \de4 > \de5),\\
\dd(e_5) = \dd(e_6) &= (\de1 < \de2 > \de3 < \de4).
\end{align*}
Note that $e_3,e_4$ are blue so we must have that all of $e_1,e_2,e_5,e_6$ are red. If $e_5,e_6$ are red, then regardless of which rule applies, $\de2 > \de4$ and thus $e_1,e_2$ are blue, so there are at most 2 red edges.

\textit{Case 2a}: Suppose $\de1 > \de2 > \de3 < \de4 > \de5$. This implies that
\begin{align*}
\dd(e_1) &= (\de2 > \de3 < \de4 > \de5),\\
\dd(e_2) &= (\de1 > \de3 < \de4 > \de5),\\
\dd(e_3) = \dd(e_4) &= (\de1 > \de2 \com \de4 > \de5),\\
\dd(e_5) = \dd(e_6) &= (\de1 > \de2 > \de3 < \de4).
\end{align*}
Note that $e_5,e_6$ are blue so that all of $e_1,\dots,e_4$ are red. Since $e_1$ is red, we must have that $\de2 < \de4$, so $\dd(e_i)$ are ordered as in the second condition for red edges for all $i\in[4]$. Thus, $e_1$ implies that $\phi(\de2,\de5) = \text{red}$ while $e_3$ implies that $\phi(\de2,\de5) = \text{blue}$, a contradiction.

\textit{Case 2b}: Suppose $\de1 < \de2 < \de3 > \de4 < \de5$. This implies that
\begin{align*}
\dd(e_1) = \dd(e_2) &= (\de2 < \de3 > \de4 < \de5),\\
\dd(e_3) &= (\de1 < \de3 > \de4 < \de5),\\
\dd(e_4) = \dd(e_5) &= (\de1 < \de2 < \de3 \com \de5),\\
\dd(e_6) &= (\de1 < \de2 < \de3 > \de4).
\end{align*}
Since $e_6$ is blue, in order to have at least 4 red edges, we must have that $e_4,e_5$ are red. Thus $\de3 < \de5$. However, then for $e_1,e_2$ to be red, we must have that $\de2 = \de5$, which is impossible since $\de2 < \de5$. Thus, there are at most 3 red edges here.

\textit{Case 3a}: Suppose $\de1 > \de2 < \de3 > \de4 > \de5$. This implies that
\begin{align*}
\dd(e_1) &= (\de2 < \de3 > \de4 > \de5),\\
\dd(e_2) = \dd(e_3) &= (\de1 \com\de3 > \de4 > \de5),\\
\dd(e_4) &= (\de1 > \de2 < \de3 > \de5),\\
\dd(e_5) = \dd(e_6) &= (\de1 > \de2 < \de3 > \de4).
\end{align*}
Since $e_1$ is blue, we must have that $e_5,e_6$ are red and thus $\de1 < \de3$. However, we also must have $e_2,e_3$ are red and thus $\de1 > \de3$, a contradiction.

\textit{Case 3b}: Suppose $\de1 < \de2 > \de3 < \de4 < \de5$. This implies that
\begin{align*}
\dd(e_1) = \dd(e_2) &= (\de2 > \de3 < \de4 < \de5),\\
\dd(e_3) = \dd(e_4) &= (\de1 < \de2 \com \de4 < \de5),\\
\dd(e_5) &= (\de1 < \de2 > \de3 < \de5),\\
\dd(e_6) &= (\de1 < \de2 > \de3 < \de4).
\end{align*}
Since $e_1,e_2$ are blue, we must have that the remaining edges are red. If $\de2 < \de4$, then $e_6$ is blue. Otherwise $\de2 > \de4$. First if $\de1 = \de4$ then $e_3,e_4$ are blue. Thus, for $e_6$ to be red, we have that $\de1 < \de3$, which implies that $\de1 < \de4 < \de5$. From $e_3$ being red, we find that $\de2 > \de5$ as well. We then have that $\phi(\de1,\de4) = \text{red}$ from $e_6$ while $\phi(\de1, \de4) = \text{blue}$ from $e_3$, a contradiction.

\textit{Case 4a}: Suppose $\de1 > \de2 < \de3 < \de4 > \de5$. This implies that
\begin{align*}
\dd(e_1) &= (\de2 < \de3 < \de4 > \de5),\\
\dd(e_5) = \dd(e_6) &= (\de1 > \de2 < \de3 < \de4).
\end{align*}
so we have at least 3 blue edges.

\textit{Case 4b}: Suppose $\de1 < \de2 > \de3 > \de4 < \de5$. This implies that
\begin{align*}
\dd(e_1) = \dd(e_2) &= (\de2 > \de3 > \de4 < \de5),\\
\dd(e_6) &= (\de1 < \de2 > \de3 > \de4).
\end{align*}
so we have at least 3 blue edges.


\textit{Case 5}: Suppose $\de1 > \de2 < \de3 < \de4 < \de5$ or $\de1 < \de2 > \de3 > \de4 > \de5$. In the first case, each of $\dd(e_4), \dd(e_5), \dd(e_6)$ is in the form $\de1 > \de2 < \de{i} < \de{j}$ where $i,j \in \{3,4,5\}$, so these are blue. In the second case, each of $\dd(e_4), \dd(e_5), \dd(e_6)$ is in the form $\de1 < \de2 > \de{i} > \de{j}$ where $i, j \in\{3,4,5\}$, so these are blue.

\textit{Case 6}: Suppose $\de1 > \de2 > \de3 < \de4 < \de5$ or $\de1 < \de2 < \de3 > \de4 > \de5$. In the first case, 
\begin{align*}
\dd(e_1) &= (\de2 > \de3 < \de4 < \de5),\\
\dd(e_2) &= (\de1 > \de3 < \de4 < \de5),\\ 
\dd(e_6) &= (\de1 > \de2 > \de3 < \de4).
\end{align*}
so there are at least 3 blue edges. In the second case, $\dd(e_1), \dd(e_2)$ are both $\de2 < \de3 > \de4 > \de5$ and thus blue. Similarly, $\dd(e_6) = \de1 < \de2 < \de3 > \de4$, so there are at least 3 blue edges.

\textit{Case 7}: Suppose $\de1 > \de2 > \de3 > \de4 < \de5$ or $\de1 < \de2 < \de3 < \de4 > \de5$. In the first case, each of $\dd(e_1), \dd(e_2), \dd(e_3)$ is in the form $\de{i} > \de{j} > \de4 < \de5$ for $i,j \in [3]$ and thus blue. In the second case, each of $\dd(e_1), \dd(e_2), \dd(e_3)$ is in the form $\de{i} < \de{j} < \de4 > \de5$ for $i,j \in [3]$ and thus blue.

\textit{Case 8a}: Suppose $\de1 > \de2 > \de3 > \de4 > \de5$. This implies that
\begin{align*}
\dd(e_1) &= (\de2 > \de3 > \de4 > \de5),\\
\dd(e_2) &= (\de1 > \de3 > \de4 > \de5),\\
\dd(e_3) &= (\de1 > \de2 > \de4 > \de5),\\
\dd(e_4) &= (\de1 > \de2 > \de3 > \de5),\\
\dd(e_5) = \dd(e_6) &= (\de1 > \de2 > \de3 > \de4).
\end{align*}
First if $e_5, e_6$ are red, then $\phi(\de4, \de1) = \text{blue}$ implies that $e_2, e_3$ are blue, and $\phi(\de4, \de2) = \text{blue}$ implies that $e_1$ is blue, a contradiction. Thus, $e_5, e_6$ are blue and $e_1$ must be red but then $\phi(\de5, \de3) = \text{blue}$ implies that $e_4$ is blue, a contradiction.

\textit{Case 8b}: Suppose $\de1 < \de2 < \de3 < \de4 < \de5$. This implies that
\begin{align*}
\dd(e_1) = \dd(e_2) &= (\de2 < \de3 < \de4 < \de5),\\
\dd(e_3) &= (\de1 < \de3 < \de4 < \de5),\\
\dd(e_4) &= (\de1 < \de2 < \de4 < \de5),\\
\dd(e_5) &= (\de1 < \de2 < \de3 < \de5),\\
\dd(e_6) &= (\de1 < \de2 < \de3 < \de4).
\end{align*}
If $e_1,e_2$ are red, then $\phi(\de2, \de5) = \text{blue}$ implies that $e_4,e_5$ are blue and $\phi(\de2, \de4) = \text{blue}$ implies that $e_6$ is blue, a contradiction. Thus, $e_1, e_2$ are blue and $e_6$ must be red but then $\phi(\de1,\de3) = \text{blue}$ implies that $e_3$ is blue, a contradiction.

Thus, for every 6 vertices in $V = \{0,1,\ldots, 2^{\lfloor 2^{cn}\rfloor} - 1\}$, $\chi$ produces at most 3 red edges among them.


Now, we show that there is no blue $K^{(5)}_{128n^4}$ in coloring $\chi$. We first make the following definitions. Given a sequence $\{a_i\}_{i=1}^r \subseteq \mathbb{R}$ and $j \in \{2,\dots,r-1\}$, we say that $a_j$ is a \textit{local minimum} if $a_{j-1} > a_j < a_{j+1}$, a \textit{local maximum} if $a_{j-1} < a_j > a_{j+1}$, and a \textit{local extremum} if it is either a local minimum or local maximum. In particular, when looking at some set of vertices $\{v_1,\dots,v_s\}$ where $v_1 < v_2 < \cdots < v_s$ and considering the sequence $\{\delta(v_i,v_{i+1})\}_{i=1}^{s-1}$, by Property I, $\delta(v_j,v_{j+1}) \neq \delta(v_{j+1},v_{j+2})$ for every $j$, so every nonmonotone sequence will have local extrema. 

Set $m = 128n^4$ and consider vertices $v_1,\dots,v_m \in V$ such that $v_1 < v_2 < \cdots < v_m$. Assume for the sake of contradiction that these $m$ vertices correspond to a blue clique in the coloring $\chi$. Again, let $\de{i} = \dd(v_i,v_{i+1})$. We first note the following lemma.

\begin{lemma}
\label{lem:monotone}
There is no monotone subsequence $\{\de{k_\ell}\}_{\ell=1}^{n} \subset \{\de{i}\}_{i=1}^{m-1}$ such that for any $a,b,c,d \in [n]$ with $a < b < c < d$, there exists $u_1,u_2,u_3,u_4,u_5 \subset \{v_1,\dots, v_m\}$ such that $\dd(u_1,\dots,u_5) = \{\de{k_a},\de{k_b},\de{k_c},\de{k_d}\}$.
\end{lemma}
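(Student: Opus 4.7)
The plan is to obtain a contradiction with the assumption that $v_1,\ldots,v_m$ forms a blue clique under $\chi$ by exhibiting a red 5-edge among them. Suppose, toward a contradiction, that such a monotone subsequence $\{\delta_{k_\ell}\}_{\ell=1}^n$ exists. By the symmetry between the two subcases of condition~1 in the definition of $\chi$-red, it suffices to treat the strictly increasing case, so I assume $\delta_{k_1} < \delta_{k_2} < \cdots < \delta_{k_n}$; the decreasing case is entirely analogous using the second half of condition~1 of the red rule.

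Next, I would view $A = \{\delta_{k_1},\ldots,\delta_{k_n}\}$ as an $n$-element subset of $U = \{0,1,\ldots,\lfloor 2^{cn}\rfloor\}$, to which part~(2) of Lemma~\ref{lem:graphcolor} applies. This yields four indices $a_1 < a_2 < a_3 < a_4$ in $[n]$ such that the quadruple $\delta_{k_{a_1}} < \delta_{k_{a_2}} < \delta_{k_{a_3}} < \delta_{k_{a_4}}$ is a bad 4-tuple under $\phi$, i.e.,
\[
\phi(\delta_{k_{a_1}},\delta_{k_{a_2}}) = \phi(\delta_{k_{a_2}},\delta_{k_{a_3}}) = \phi(\delta_{k_{a_2}},\delta_{k_{a_4}}) = \text{red},
\]
while the remaining three pairs receive color blue. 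Crucially, the increasing order of these values matches the order of their indices in $[n]$ precisely because the subsequence is monotone increasing.

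Now I apply the hypothesis of the lemma with $(a,b,c,d) = (a_1,a_2,a_3,a_4)$ to obtain vertices $u_1 < u_2 < u_3 < u_4 < u_5$ in $\{v_1,\ldots,v_m\}$ whose consecutive $\delta$-values are precisely $\delta_{k_{a_1}},\delta_{k_{a_2}},\delta_{k_{a_3}},\delta_{k_{a_4}}$ in this order. Since these four values are strictly increasing and form a bad 4-tuple under $\phi$, the first subcase of condition~1 of the red rule fires, forcing $\chi(u_1,u_2,u_3,u_4,u_5) = \text{red}$. This contradicts the blue-clique assumption, completing the proof.

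The main subtlety I anticipate is interpreting the expression $\dd(u_1,\ldots,u_5) = \{\delta_{k_a},\delta_{k_b},\delta_{k_c},\delta_{k_d}\}$ in the lemma statement: for the argument above to conclude via condition~1 we need the four consecutive $\delta$'s to appear in the monotone order of the subsequence, not in some arbitrary permutation of the set. Under the ordered-tuple reading (consistent with the earlier use of $\dd(e_i)$ as a sequence throughout the case analysis), this is automatic from the monotonicity of $\{\delta_{k_\ell}\}$. If one instead reads the equality as mere set-equality, the argument would need to additionally verify that for every permutation of the four values, at least one of conditions~1--4 still yields a red 5-edge, which would be considerably more involved.
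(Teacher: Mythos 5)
Your proof is correct and is essentially the same as the paper's: both arguments chain (a) the hypothesis that each 4-term subfamily of the monotone subsequence is realized as the $\dd$-sequence of some 5-tuple, (b) condition~1 of the red rule for $\chi$, and (c) part~(2) of Lemma~\ref{lem:graphcolor}, to derive a contradiction with the blue-clique assumption. The only difference is the order in which the contradiction is extracted — you invoke Lemma~\ref{lem:graphcolor} first to produce a bad 4-tuple and then exhibit a red 5-edge, whereas the paper argues that blueness of every relevant 5-edge would force the whole $n$-set to be free of bad 4-tuples — and your note on reading $\dd(u_1,\ldots,u_5)$ as an ordered tuple correctly identifies the intended interpretation used throughout the paper.
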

\begin{proof}Indeed, if such a monotone subsequence existed, then as $\chi(u_1,\dots,u_5) = \text{blue}$, we have that $\{\de{k_\ell}\}_{\ell=1}^{n}$ would form an $n$-set with no bad 4-tuple in the graph coloring $\phi$, a contradiction.\end{proof}

From this, we note that there is no integer $j \in [m-n+1]$ such that the sequence $\{\de{i}\}_{i=j}^{j+n-1}$ is monotone. Otherwise, by Property IV, we have that for any length 4 subsequence $\{\dei{1},\dei{2},\dei{3},\dei{4}\} \subset \{\de{i}\}_{i=j}^{j+n-1}$, there is a 5-tuple $e \subset \{v_1,\dots,v_m\}$ such that $\dd(e)$ corresponds to this monotone sequence. From here, we apply Lemma~\ref{lem:monotone} to get a contradiction. Thus, we can find a sequence of consecutive local extrema and from this extract a sequence of local maxima $\dei1,\dots,\dei{32n^3}$.

We now restrict our attention to this sequence of local maxima $(\dei1,\dots,\dei{32n^3})$.  Note that any two local maxima are distinct: assume for the sake of contradiction that we have maxima $\dei{j} = \dei{k}$ where $j<k$. First consider if there is no $\delta_\ell$ for $i_j < \ell < i_k$ such that $\delta_\ell > \dei{j} = \dei{k}$. Then, $\dd(v_{i_j},v_{i_k}) = \dei{j} = \dei{k} = \dd(v_{i_k},v_{i_k+1})$, a contradiction of Property I. Otherwise, there exists $i_j < \ell < i_k$ such that $\delta_\ell > \dei{j} = \dei{k}$.  By letting $\ell$ correspond to the maximum $\de{\ell}$ in this range, we have 
\[\dd(v_{i_j},v_{i_j+1},v_{i_k-1},v_{i_k},v_{i_k+1}) = (\dei{j} < \de{\ell} > \de{i_k-1} < \dei{k}),\]
which implies that $\chi(v_{i_j},v_{i_j+1},v_{i_k-1},v_{i_k},v_{i_k+1}) = \text{red}$ as $\dei{j} = \dei{k}$, contradiction.

Moreover, there is no $j \in [32n^3-n+1]$ such that the sequence $\{\dei{k}\}_{k=j}^{j+n-1}$ is monotone. If there is such $j$ and the sequence is increasing, for any $a,b,c,d \in \{j,j+1,\dots,j+n-1\}$ with $a < b < c < d$, then
\[\dd(v_{i_{a}},v_{i_{a}+1},v_{i_{b}+1},v_{i_{c}+1},v_{i_{d}+1}) = (\dei{a} < \dei{b} < \dei{c} < \dei{d}).\]
This follows by Property II; in particular, if there exists $\ell$ such that $i_{a}+1 \leq \ell < i_{b}+1$ and $\de{\ell} > \dei{b}$, then there must exist some greater local maxima between $\dei{a}$ and $\dei{b}$, a contradiction of the monotonicity of $\{\dei{k}\}_{k=j}^{j+n-1}$, as these are consecutive local maxima. Thus, by Lemma~\ref{lem:monotone}, we have a contradiction.

Similarly, if the sequence is decreasing, consider any $a,b,c,d \in \{j,j+1,\dots,j+n-1\}$ with $a < b < c < d$. Then
\[\dd(v_{i_{a}},v_{i_{b}},v_{i_{c}},v_{i_{d}},v_{i_{d}+1}) = (\dei{a} > \dei{b} > \dei{c} > \dei{d}).\]
As with the above, we apply Lemma~\ref{lem:monotone} to derive a contradiction.

Thus, within the sequence $(\dei{1},\dei{2},\dots,\dei{32n^3})$, we can find a subsequence of consecutive local extrema $\dej{1},\dots,\dej{16n^2}$, where $\dej{1},\dej{3},\dots,\dej{16n^2-1}$ are local maxima and $\dej{2},\dej{4},\dots,\dej{16n^2}$ are local minima (with respect to the sequence $\dei{1},\dei{2},\dots,\dei{32n^3}$).   

We now claim that  there exists $k \in \{4n+1,4n+2,\dots,16n^2-4n\}$ such that $\dej{\ell} < \dej{k}$ if $k-4n \leq \ell \leq k+4n$ and $\ell \neq k$. Assume for the sake of contradiction that this is not the case. We then recursively build the following sets $S_r,T_r$. Start with $S_0 = T_0 = \varnothing, \sigma_0 = 0, \tau_0 = 16n^2+1$. At each step $r$,
\begin{enumerate}
\item $\sigma_r = 0$ if $S_r$ is empty and $\sigma_r = \max(S_r)$ otherwise. Similarly, $\tau_r = 16n^2+1$ if $T_r$ is empty and $\tau_r = \min(T_r)$ otherwise.
\item If $s \in S_r$ and $s < \ell < \tau_r$, then $\dej{s} > \dej{\ell}$. Similarly if $t \in T_r$ and $\sigma_r < \ell < t$, then $\dej{t} > \dej{\ell}$.
\item $\abs{S_r} + \abs{T_r} = r$ and $\tau_r - \sigma_r \geq 16n^2 - 4nr$.
\end{enumerate}
Note that these properties hold for $r = 0$ by definition. Now assume that we have $S_r, T_r, \sigma_r, \tau_r$ satisfying the desired properties for some $r < 2n$. Note that by the properties, we have that 
\[\tau_r - \sigma_r \geq 16n^2 - 4nr > 16n^2 - 8n^2 \geq 8n^2 > 0.\]
Consider $\sigma_r < k < \tau_r$ such that $\dej{k} = \max_{\sigma_r < \ell < \tau_r}\dej{\ell}$. If $k - \sigma_r > 4n$ and $\tau_r - k > 4n$, then $k$ would satisfy that $\dej{\ell} < \dej{k}$ if $k-4n \leq \ell \leq k+4n$ and $\ell \neq k$, a contradiction. Now if $k - \sigma_r \leq 4n$, set 
\[S_{r+1} = S_r \cup \{k\},\quad T_{r+1} = T_r,\quad \sigma_{r+1} = k,\quad \tau_{r+1} = \tau_r.\]
Then, the first property holds by definition. The second property holds for every $s \in S_r, t \in T_r$ by assumption, and it holds for $k \in S_{r+1}$ since $\dej{k} = \max_{\sigma_r < \ell < \tau_r}\dej{\ell}$. The first part of the third property clearly holds and
\[\tau_{r+1} - \sigma_{r+1} = \tau_r - k \geq \tau_r - \sigma_r - 4n \geq 16n^2 - 4n(r+1).\]
Otherwise if $\tau_r - k \leq 4n$, set
\[S_{r+1} = S_r, \quad T_{r+1} = T_r \cup \{k\}, \quad \sigma_{r+1} = \sigma_r, \quad \tau_{r+1} = k.\]
By the same reasoning, the three properties hold as desired. Thus, we can construct these sets while $r \leq 2n$.

Now, consider $S_{2n}, T_{2n}$. Since $\abs{S_{2n}} + \abs{T_{2n}} = 2n$, at least one of these sets has size at least $n$. If $\abs{S_{2n}} \geq n$, consider $\{s_1, \dots, s_n\} \subseteq S_{2n}$ where $i < j \Rightarrow s_i < s_j$. Then, since $\min(T_{2n}) > \max(S_{2n})$ by Property 3 and 1, by Property 2 we have
\[\dej{s_1} > \dej{s_2} > \cdots > \dej{s_n}.\]
In particular, Property 2 implies that for $a,b,c,d\in [n]$ and $a < b < c < d$,
\[\dd(v_{j_{s_a}}, v_{j_{s_b}}, v_{j_{s_c}}, v_{j_{s_d}},v_{j_{s_d}+1}) = (\dej{s_a} > \dej{s_b} > \dej{s_c} > \dej{s_d}),\]
and thus, by Lemma~\ref{lem:monotone}, we have a contradiction. If instead $\abs{T_{2n}} \geq n$, a similar argument shows that we derive a contradiction. Thus, such a $k$ exists and note that in particular $k$ must be odd.

Order the set of local minima $\{\dej{k-4n+1},\dej{k-4n+3},\dots,\dej{k+4n-1}\}$ in increasing order as $\gamma_1,\dots,\gamma_{4n}$. Let 
\[A' = \{\dej{k-4n+1},\dej{k-4n+3},\dots,\dej{k-1}\}\text{ and }B' = \{\dej{k+1},\dej{k+3},\dots,\dej{k+4n-1}\}.\]
Note that since $A',B'$ partition $\{\dej{k-4n+1},\dej{k-4n+3},\dots,\dej{k+4n-1}\}$, either $\abs{A' \cap \{\gamma_1,\dots,\gamma_{2n}\}} \geq n$ or $\abs{B' \cap \{\gamma_1,\dots,\gamma_{2n}\}} \geq n$. Without loss of generality, we assume that the former occurs since a symmetric argument would follow otherwise. Then, we also have that $\abs{B' \cap \{\gamma_{2n+1},\dots,\gamma_{4n}\}} \geq n$. Set
\[A = A' \cap \{\gamma_1,\dots,\gamma_{2n}\} \text{ and }B = B' \cap \{\gamma_{2n+1},\dots,\gamma_{4n}\}.\]

Let $a \in A$ and $b \in B$. By definition, $\dej{a} < \dej{b}$, and note that $b < k+4n \Rightarrow b+1 \leq k+4n$, so
\[\dd(v_{j_{a}}, v_{j_{a}+1}, v_{j_{b}}, v_{j_{b}+1}, v_{j_{b+1}+1}) = (\dej{a} < \dej{k} > \dej{b} < \dej{b+1}),\]
where $\dej{k} > \dej{b+1}$ by definition. Since $$\chi(v_{j_{a}}, v_{j_{a}+1}, v_{j_{b}}, v_{j_{b}+1}, v_{j_{b+1}+1}) = \textnormal{blue},$$ we cannot have both $\phi(\dej{a},\dej{b+1}) = \text{red}$ and $\phi(\dej{a},\dej{b}) = \text{blue}$. 
Finally, restricting to any $n$ elements of $A,B$ and letting
\[C = \{\dej{b+1}: \dej{b} \in B\},\]
and defining $f: B \to C$ via $\dej{b} \mapsto \dej{b+1}$, we obtain 3 disjoint $n$-sets with precisely the structure avoided in the graph coloring $\phi$, a contradiction.

Thus, $\chi$ does not produce a blue $K^{(5)}_{128n^4}$ on $V$.  $\hfill\square$

\section{Concluding remarks}

We have determined the tower growth rate for $r_k(k+1,k-1;n)$.  Thus, the only problem remaining for the Erd\H os-Hajnal hypergraph Ramsey conjecture, is to determine the tower growth rate for $r_k(k+1,k;n)$.  

Let us remark that similar arguments show that $r_5(6,5;4n^2) > 2^{r_4(5,4;n)-1}$. To define such a coloring, let $N = r_4(5,4;n)-1$ and let $\varphi$ be a red/blue coloring of the 4-tuples of $\{0,\dots,N-1\}$ such that there are there are at most 3 red edges among every 5 vertices and there is no blue clique of size $n$. We then color the 5-tuples of $V = \{0,1,\dots,2^N-1\}$ so that $\chi$ produces at most 4 red edges among any 6 vertices and $\chi$ does not produce a blue clique of size $4n^2$. For vertices $v_1,\dots,v_5$ with $v_1 < v_2 < \cdots < v_5$, let $\de{i} = \dd(v_i,v_{i+1})$. We set $\chi(v_1,\dots,v_5) = \text{red}$ if:
\begin{enumerate}
\item We have that $\de1,\de2,\de3,\de4$ are monotone and $\varphi(\de1,\de2,\de3,\de4) = \text{red}$.
\item We have that $\de1 > \de2 < \de3 > \de4$ and $\de1 < \de3$.
\end{enumerate}
Together with Lemma \ref{stepk}, showing that $r_4(5,4;n)$ grows double exponential in a power of $n$ would thus show that $r_k(k+1,k;n) = \twr_{k-1}(n^{\Theta(1)}).$

\end{document}